\documentclass[12pt,a4paper]{article}
\usepackage{amsmath,amssymb,amsfonts}
\usepackage[latin1]{inputenc}

\def\Bbb{\mathbb}

\title{\bf Mean values and variances of the digits of $1/p$}

\author{Kurt Girstmair\thanks{MSC2020: 11A63; 11R42; 11L26. Keywords: Digit expansions; digit variances; generalized Bernoulli numbers.}}

\date{}

\makeatletter
\let\@@maketitle=\maketitle
\def\maketitle{\def\thispagestyle##1{\relax}\@@maketitle}
\makeatother
\newtheorem{theorem}{Theorem}

\def\BE{\begin{equation}}
\def\EE{\end{equation}}
\def\BD{\begin{displaymath}}
\def\ED{\end{displaymath}}
\def\BA{\begin{array}}
\def\EA{\end{array}}
\def\BEA{\begin{eqnarray*}}
\def\EEA{\end{eqnarray*}}
\def\BI{\bibitem}

\def\Z{\Bbb Z}
\def\Q{\Bbb Q}

\def\phi{\varphi}

\def\MB{\mbox}
\def\LD{\ldots}
\def\OV{\overline}

\def\WH{\widehat}

\def\sminus{\smallsetminus}
\def\DIV{\,|\,}
\def\NDIV{\, \nmid \,}

\def\MN{\medskip\noindent}
\def\STOP{\hfill$\Box$}

\def\BPS{B_{\psi}}
\def\BPSB{B_{\OV{\psi}}}

\newcommand{\btop}[2]{\genfrac{}{}{0pt}{1}{#1}{#2}}

\def\LS#1#2{ \left( \frac{#1}{#2} \right) }

\begin{document}
\maketitle
\normalsize

\begin{abstract}

Let $p\ge 3$ be a prime and $b\ge 2$ an integer such that $p$ does not divide $b$. Then $1/p$ has a periodic digit expansion with respect to the basis $b$. The length $l$ of the period
is the (multiplicative) order of $b$ mod $p$. In the cases $l=p-1$ and $l=(p-1)/2$, formulas for the variance of the digits of a period were given previously.
These formulas involved Dedekind sums, class numbers of imaginary quadratic number fields, and generalized Bernoulli numbers.
In the present paper we develop a theory of this kind for $l=(p-1)/2^m$, $m\ge 1$, which
covers the special case $l=(p-1)/2$.

\end{abstract}

\section*{1. Introduction and results}

Let $p\ge 3$ be a prime, $b\ge 2$ an integer such that $p\NDIV b$.
Then
\BE
\label{1.1}
  \frac 1p=\sum_{j=1}^{\infty}c_jb^{-j},
\EE
where the numbers $c_j\in\{0,1,\LD,b-1\}$ are the {\em digits} of $1/p$ with respect to the basis $b$. It is well-known that the sequence of the digits is periodic and
that $(c_1,\LD,c_l)$ is a period, $l$ being the (multiplicative) order of $b$ mod $p$; see \cite{Gi1}.

Let $S$ and $T$ denote the sums
\BE
\label{1.2}
  S=\sum_{j=1}^l c_j,\: T=\sum_{j=1}^l c_j^2.
\EE
Hence the mean value of the digits $c_1,\LD,c_l$ is $S/l$.
The variance of this sequence is
\BD
 \sigma^2=\frac{T}{l}-\frac{S^2}{l^2}.
\ED
In the paper \cite{Gi2} we determined $\sigma^2$ in the case $l=p-1$
Indeed, in this case we obtained
\BD
\sigma^2=\frac{2bs(p,b)}{p-1}+\frac{(b-1)(bp-3b+p+3)}{12(p-1)},
\ED
where
\BE
\label{1.2.1}
  s(p,b)=\sum_{k=1}^{b-1}((k/b))((pk/b))
\EE
is the classical Dedekind sum; for its definition see \cite[formula (1)]{RaGr}.

In the paper \cite{Gi3} we determined the variance of the sequence $c_1,\LD,c_l$ in the case $l=(p-1)/2$.
We will see that this result is a special case of the results of this paper.

These results require some preparations, which we present now.
Let $\chi$ be a Dirichlet character mod $p$ of order
$2^r$, $r\ge 1$. Let $\zeta$ denote a primitive $2^r$th root of unity.
A linear combination
\BE
\label{1.3}
 \sum_{k=1}^{p-1}a_k\chi(k),\: a_k\in\Q,
\EE
can be expressed in terms of the standard basis $(1,\zeta,\LD,\zeta^{2^{r-1}-1})$ of the field $\Q(\zeta)$, namely,
\BD
\label{1.3.1}
 \sum_{k=1}^{p-1}a_k\chi(k)=\sum_{j=0}^{2^{r-1}-1}b_j\zeta^j,\: b_j\in\Q.
\ED
We put
\BE
\label{1.3.2}
  \left[\sum_{k=1}^{p-1}a_k\chi(k)\right]_0=b_0.
\EE
Note that this definition is independent of the choice of $\zeta$. Indeed, the character $\chi$ takes only the values $\pm\zeta^j$, $j=0,\LD, 2^{r-1}-1$, and $(1,\zeta,\LD,\zeta^{2^{r-1}-1})$ is $\Q$-linearly
independent. Accordingly,
\BD
   b_j=\sum_{\btop{k=1}{\chi(k)=\zeta^j}}^{p-1}a_k-\sum_{\btop{k=1}{\chi(k)=-\zeta^j}}^{p-1}a_k,\:j=0,\LD,2^{r-1}-1.
\ED
In particular,
\BD
  b_0=\sum_{\btop{k=1}{\chi(k)=1}}^{p-1}a_k-\sum_{\btop{k=1}{\chi(k)=-1}}^{p-1}a_k,
\ED
and so $b_0$ does not depend on $\zeta$.

For an odd Dirichlet character $\chi$ mod $p$ of the order $2^r$, $r\ge 1$, we need the generalized Bernoulli number
\BD
  B_{\chi}=\frac 1p\sum_{k=1}^{p-1}k\chi(k).
\ED
Here $\left[B_{\chi}\right]_0$ is well-defined.

Suppose that $l=(p-1)/2^m$, $m\ge 1$. In particular, $p\equiv 1$ mod $2^m$. Then we can select a Dirichlet character $\chi_r$ mod $p$ of order $2^r$, $r=1,\LD,m$.
The characters $\chi_1,\LD,\chi_{m-1}$ are even. The character $\chi_m$ is even, if $p\equiv 1$ mod $2^{m+1}$, and odd, if $p\equiv 2^m+1$ mod $2^{m+1}$.

\begin{theorem} 
\label{t1}

Let $l=(p-1)/2^m$ be the order of $b$ mod $p$. If $p\equiv 1$ mod $2^{m+1}$,
the sum $S$ of {\rm (\ref{1.2})} has the form
\BD
  S=(b-1)(p-1)/2^{m+1}.
\ED
 If $p\equiv 2^m+1$ mod $2^{m+1}$, then
\BD
 S=(b-1)(p-1)/2^{m+1}+(b-1)\left[B_{\chi_m}\right]_0/2,
\ED
where $\chi_m$ is a character mod $p$ of order $2^m$.

\end{theorem}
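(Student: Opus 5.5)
We begin by expressing the digits through residues. From (\ref{1.1}) one has $c_j=\lfloor b r_{j-1}/p\rfloor$, where $r_{j}$ denotes the least positive residue of $b^{j}$ mod $p$. Equivalently,
\BD
 c_j=\frac{b r_{j-1}-r_j}{p},\quad j=1,\LD,l,
\ED
since $b r_{j-1}=c_j p+r_j$ with $0<r_j<p$. Summing over a period, and using that $r_0=r_l=1$ so that $r_0,\LD,r_{l-1}$ and $r_1,\LD,r_l$ run through the same set, we get
\BD
 S=\frac{b-1}{p}\sum_{k\in H}k,
\ED
where $H=\langle b\rangle$ is the subgroup of $(\Z/p\Z)^*$ of index $2^m$, and each $k$ is taken in $\{1,\LD,p-1\}$. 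So the whole task reduces to computing $\Sigma_H=\sum_{k\in H}k$.

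Next we detect $H$ with characters. The quotient $(\Z/p\Z)^*/H$ is cyclic of order $2^m$, so the characters trivial on $H$ are exactly the powers $\chi_m^{t}$, $t=0,\LD,2^m-1$, where $\chi_m$ may be chosen with kernel $H$. (Any character of order $2^m$ has kernel $H$, because this is the unique subgroup of index $2^m$.) Orthogonality then gives
\BD
 \Sigma_H=\frac{1}{2^m}\sum_{t=0}^{2^m-1}\sum_{k=1}^{p-1}k\chi_m^t(k)
 =\frac{p(p-1)}{2^{m+1}}+\frac{p}{2^m}\sum_{t=1}^{2^m-1}B_{\chi_m^t}.
\ED
For even nontrivial $\psi$ we have $\sum k\psi(k)=0$, by pairing $k$ with $p-k$. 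Hence only odd powers $\chi_m^t$ contribute. If $p\equiv1$ mod $2^{m+1}$, then $-1$ is a $2^{m}$th power residue, that is, $-1\in H$. All $\chi_m^t$ are then even, so $\Sigma_H=p(p-1)/2^{m+1}$, which gives the first formula. This is also clear directly, because $H=-H$ and the elements of $H$ pair up as $k,p-k$.

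In the case $p\equiv 2^m+1$ mod $2^{m+1}$, $\chi_m$ is odd, and $\chi_m^t$ is odd exactly for odd $t$. We must show
\BD
 \frac{1}{2^m}\sum_{t\ \mathrm{odd}}B_{\chi_m^t}=\frac{1}{2}\left[B_{\chi_m}\right]_0 .
\ED
The main point is this identification. The odd $t$ are the Galois conjugates $\zeta\mapsto\zeta^t$ of $\Q(\zeta)/\Q$, with $\zeta=\chi_m(\text{generator})$ of order $2^m$. So the left side is $\frac{1}{2^m}\mathrm{Tr}_{\Q(\zeta)/\Q}(B_{\chi_m})$, and the field degree is $2^{m-1}$. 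Writing $B_{\chi_m}=\sum_j b_j\zeta^j$ in the standard basis, we have $\mathrm{Tr}(\zeta^j)=0$ for $1\le j\le 2^{m-1}-1$ and $\mathrm{Tr}(1)=2^{m-1}$. (This uses that the trace of a primitive $2^{s}$th root of unity, $s\ge2$, is $0$; the case $m=1$ is trivial.) Therefore the trace equals $2^{m-1}b_0$, and the left side is $b_0/2=\left[B_{\chi_m}\right]_0/2$.

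Combining the pieces, $S=\frac{b-1}{p}\Sigma_H=(b-1)(p-1)/2^{m+1}+(b-1)\left[B_{\chi_m}\right]_0/2$. The only delicate step is the trace computation, in particular checking that the odd exponents correspond bijectively to the embeddings of $\Q(\zeta)$. That requires $\zeta$ to have order exactly $2^m$, which holds because $\chi_m$ has order $2^m$.
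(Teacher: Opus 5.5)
Your proof is correct, and it takes a genuinely different route from the paper's.

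\textbf{The paper's route.} The paper quotes the same digit formula $c_j=(br_{j-1}-r_j)/p$, but uses it only to translate $c_j\le k$ into $(b^{j-1})_p\in I_{b,k+1}$. It then splits the indicator of $\chi_m(n)=1$ by repeated halving into
$(1+\chi_1)/2^m+[\chi_2]_0/2^{m-1}+\dots+[\chi_m]_0/2$.
The resulting incomplete sums $t_{r,d,l}=\sum_{1\le n<lp/d}\chi_r(n)$, $d\DIV b$, are evaluated by Szmidt's formula in terms of the $B_{\chi_r\psi}$, $\psi$ mod $d$. Orthogonality in $l$ kills these for even $\chi_r$. A M\"obius computation using $\chi_m(b)=1$ gives $S_m=(b-1)B_{\chi_m}$ in the odd case. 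Finally, $S_0$ needs a floor-sum identity of Rademacher--Grosswald.

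\textbf{Your route.} Summing the digit formula over the period telescopes to $S=(b-1)\Sigma_H/p$, which bypasses all of that machinery. After this you need only orthogonality, the vanishing of $\sum_k k\chi(k)$ for even nontrivial $\chi$, and the definition of $B_\chi$. The same idea also shortcuts the paper's own setup: $\lfloor bn/p\rfloor=(bn-(bn)_p)/p$ together with $\chi_r(b)=1$ gives the paper's $S_r=\sum_n\lfloor bn/p\rfloor\chi_r(n)=(b-1)B_{\chi_r}$ in one line.

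Your trace step is the Galois form of the paper's halving identity. The characters of exact order $2^r$ are the $\chi_r^t$ with $t$ odd, and $2^{1-r}\sum_{t\ \mathrm{odd}}\chi_r^t(n)=[\chi_r(n)]_0$. So both proofs use the same decomposition. Yours additionally shows why $[B_{\chi_m}]_0=\mathrm{Tr}(B_{\chi_m})/2^{m-1}$ does not depend on the choice of $\chi_m$.

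\textbf{What the paper's heavier route buys.} It is built to be reused for $T$ in Theorem 2. There, squaring $c_j=(br_{j-1}-r_j)/p$ produces the mixed sum $\sum_{k\in H}k\,(bk)_p$, so telescoping no longer reduces the problem to $\sum_{k\in H}k$. The interval framework, by contrast, handles the weights $k^2$ directly: $\sum_k k\lfloor kp/b\rfloor$ yields the Dedekind sum, and $\sum_k k\,t_{r,b,k}$ yields the products $B_{\chi_r\psi}B_{\OV{\psi}}$. For Theorem 1 alone, your argument is the more economical one.
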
 

Let $d$ be a positive integer. Then $X_d$ denotes the set Dirichlet characters mod $d$. The subsets of even and odd characters are denoted by $X_d^+$ and $X_d^-$, respectively.
For $\chi\in X_p$  and $\psi\in X_d$  the Dirichlet character $\chi\psi\in X_{pd}$ is defined by $\chi\psi(k)=\chi(k)\psi(k)$. The corresponding Bernoulli number
is
\BD
  B_{\chi\psi}=\frac 1{dp}\sum_{k=1}^{dp-1}k\chi\psi(k).
\ED
We use a bar to denote complex conjugation, for example $\OV{\psi}$ for the complex conjugate of the character $\psi$.
Recall that $s(p,b)$ is the classical Dedekind sum; see (\ref{1.2.1}). As usual, $\phi(\LD)$ denotes Euler's function.

\begin{theorem} 
\label{t2}

Let $l=(p-1)/2^m$ be the order of $b$ mod $p$, and, as above, $\chi_r$ a Dirichlet character mod $p$ of order $2^r$, $r=1,\LD,m$. The sum $T$ of {\rm (\ref{1.2})} takes the value
\BE
\label{1.4}
 T=(T_0+T_1)/2^m+[T_2]_0/2^{m-1}+\LD+[T_m]_0/2.
\EE
Here
\BE
\label{1.6}
 T_0=2bs(p,b)+\frac{b-1}6(2bp-p-3b+3)
\EE
and
\BE
\label{1.8}
   T_r=2b\sum_{d\DIV b}\frac{\OV{\chi_r}(d)}{\phi(d)}\sum_{\psi\in X_d^-}\OV{\psi}(p)B_{\chi_r\psi}B_{\OV{\psi}},
\EE
for $r=1,\LD,m-1$, and also for $r=m$ if $\chi_m$ is even.
If $\chi_m$ is odd, then
\BE
\label{1.10}
 T_m=(b-1)^2B_{\chi_m}.
\EE
\end{theorem}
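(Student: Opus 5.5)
The plan is to express the digits through fractional parts and then reduce the period sum $T$ to a sum over the subgroup $H=\langle b\rangle$ of $(\Z/p\Z)^\times$, which has index $2^m$.

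\textbf{Step 1: digits as fractional parts.} From (\ref{1.1}) we have $c_j=\lfloor b\{b^{j-1}/p\}\rfloor$. Writing $x_j=\{b^{j-1}/p\}=k_j/p$, where $k_j$ is the least positive residue of $b^{j-1}$ mod $p$, this gives
\BD
 c_j=bx_j-x_{j+1}.
\ED
Since $j\mapsto k_j$ runs through $H$ exactly once per period, each $c_j^2$, and hence $T$, becomes a sum over $k\in H$ of a function $f(k)$. This function involves $\{k/p\}^2$ and the mixed term $\{k/p\}\{bk/p\}$.

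\textbf{Step 2: expanding with the character indicator.} Define
\BD
 \epsilon(k)=\prod_{r=1}^m\frac{1+\chi_r'(k)}{2},
\ED
where $\chi_r'=\chi_m^{2^{m-r}}$ is a character of order $2^r$. This is the indicator of the subgroup of index $2^m$, which is $H$. Expanding the product telescopically gives
\BD
 1_H=\frac{1}{2^m}\Bigl(1+\chi_1+\sum_{r=2}^m 2^{r-1}\cdot\tfrac{1}{2^{r-1}}\sum_{\text{primitive of order }2^r}\chi\Bigr).
\ED
The sum over characters of exact order $2^r$ equals $2^{r-1}[\,\cdot\,]_0$ of the value at one such $\chi_r$. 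Indeed, summing over the Galois conjugates $\zeta\mapsto\zeta^{odd}$ extracts $2^{r-1}b_0$.

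This yields $T=\frac1{2^m}\bigl(\sum_k f(k)+\sum_k f(k)\chi_1(k)\bigr)+\sum_{r\ge2}[\sum_k f(k)\chi_r(k)]_0/2^{m-r+1}$, which matches (\ref{1.4}) once we identify
\BD
 T_0=\sum_{k=1}^{p-1}f(k),\qquad T_r=\sum_{k=1}^{p-1}f(k)\chi_r(k).
\ED
For $r=1$ the value $\chi_1$ is rational, so $[T_1]_0=T_1$. The same computation with $f(k)=c(k)$ proves Theorem \ref{t1}, and it explains why the odd case reduces to $B_\chi$.

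\textbf{Step 3: evaluating $T_0$ and $T_r$.} The quantity $T_0$ is exactly the full-period computation of \cite{Gi2}. Summing $(bk/p-\ldots)$ squared over all $k$ gives $2bs(p,b)$ plus the polynomial term in (\ref{1.6}).

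For $T_r$ with $\chi_r$ even, the pure-square parts $\sum\{k/p\}^2\chi(k)$ and $\sum\{bk/p\}^2\chi(k)$ combine to $(b^2+1-\ldots)\,pB_{2,\chi}$-type terms. These vanish or cancel against each other, using $\chi(b)=1$ since $b\in H$.

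The essential term is the twisted Dedekind-type sum $\sum_k\chi(k)\{k/p\}\{bk/p\}$. I would evaluate it by writing $\{bk/p\}=\sum_{s=0}^{b-1}[\ldots]$, i.e. splitting $k$ into residue classes mod $b$, or equivalently by using the Fourier/Dirichlet-character expansion of $((x))$ on multiples of $1/b$. Then I would apply the standard formula
\BD
 \sum_k \chi\psi\text{-sums}=\frac{1}{\phi(d)}\sum_{\psi\in X_d}\ldots,
\ED
grouping by $d=b/\gcd$ and using orthogonality mod $d$. Only odd $\psi$ survive, because the sawtooth is odd. This produces the products $\OV\psi(p)B_{\chi_r\psi}B_{\OV\psi}$ with weight $\OV{\chi_r}(d)/\phi(d)$, as in (\ref{1.8}).

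When $\chi_m$ is odd, the same expansion kills the cross term's odd-$\psi$ contributions by parity. What remains is the linear term $(b-1)^2\sum_k k\chi(k)/p=(b-1)^2B_{\chi_m}$, giving (\ref{1.10}).

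\textbf{Main obstacle.} The hardest step will be the evaluation of $\sum_k\chi_r(k)\{k/p\}\{bk/p\}$ in terms of $B_{\chi_r\psi}B_{\OV\psi}$. This requires a careful decomposition of $k$ mod $pb$ via the Chinese remainder theorem and Möbius/orthogonality over divisors $d\DIV b$. It also requires correct handling of the non-primitive characters $\chi_r\psi$ mod $pd$ when $\gcd(d,\cdot)$ issues arise, and of the Gauss-sum-like factor $\OV\psi(p)$.
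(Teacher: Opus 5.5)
Your Steps 1 and 2 are sound. You have $c_j=\lfloor bk_j/p\rfloor$, and expanding $1_H$ by orthogonality and then taking the trace over the characters of exact order $2^r$ gives (\ref{1.4}) with $T_r=\sum_{k=1}^{p-1}\chi_r(k)\lfloor bk/p\rfloor^2$. This is exactly the paper's $T_r$; the paper reaches (\ref{1.4}) by iterating $\chi_r(n)=1\Leftrightarrow\chi_{r+1}(n)=\pm1$. Citing [Gi2] for $T_0$ is also fine.

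The gap is the even case, formula (\ref{1.8}), which is the real content of the theorem.

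\begin{itemize}
\item Your claim that the pure-square parts ``vanish or cancel against each other'' is false. Because $\chi_r(b)=1$, you have $\sum_k\chi_r(k)\{bk/p\}^2=\sum_k\chi_r(k)\{k/p\}^2=p^{-2}\sum_k k^2\chi_r(k)=B_{2,\chi_r}/p$. So these parts add up to $(b^2+1)B_{2,\chi_r}/p$, which is nonzero for an even nonprincipal character, since $L(-1,\chi_r)\ne0$. Formula (\ref{1.8}) contains no $B_{2,\chi_r}$. Hence the twisted Dedekind sum $\sum_k\chi_r(k)\,k\,(bk)_p$ would have to supply a compensating $B_{2,\chi_r}$ term besides the products $B_{\chi_r\psi}B_{\OV{\psi}}$. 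Your sketch predicts only the products, so its bookkeeping cannot be right.
\item The cross term is never actually evaluated. ``Apply the standard formula \dots\ only odd $\psi$ survive'' names no identity that produces $B_{\chi_r\psi}B_{\OV{\psi}}$, and you yourself flag this as the main obstacle.
\end{itemize}

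The missing idea is not to split $\lfloor bk/p\rfloor^2$ into sawtooth pieces at all.

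\begin{enumerate}
\item Sum by parts over the blocks $\lfloor bn/p\rfloor=k$. This gives $T_r=-\sum_{k=1}^{b-1}(2k-1)t_{r,b,k}$ with $t_{r,b,k}=\sum_{1\le n<kp/b}\chi_r(n)$.
\item Write $k/b=l/d$ with $(l,d)=1$, $d\DIV b$, $d>1$. This reduces everything to incomplete character sums over intervals $(0,lp/d)$.
\item For these sums the paper uses the explicit formula (formula (6) of [Sz])
\[
\sum_{1\le n<lp/d}\chi_r(n)=-\frac{\OV{\chi_r}(d)}{\phi(d)}\sum_{\psi\in X_d^-}\OV{\psi}(lp)B_{\chi_r\psi}\qquad(\chi_r\ \mbox{even}).
\]
\item Then $\sum_{(l,d)=1}\OV{\psi}(l)=0$ kills the linear part. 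Meanwhile $\sum_{(l,d)=1}l\,\OV{\psi}(l)=dB_{\OV{\psi}}$, combined with the weight $k=lb/d$, yields exactly $2b\sum_{d\DIV b}\frac{\OV{\chi_r}(d)}{\phi(d)}\sum_{\psi\in X_d^-}\OV{\psi}(p)B_{\chi_r\psi}B_{\OV{\psi}}$.
\end{enumerate}

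The odd case, by contrast, does close in your framework, though not by the parity remark you give. From $\lfloor b(p-k)/p\rfloor=b-1-\lfloor bk/p\rfloor$ and $\chi_m(p-k)=-\chi_m(k)$ you get $T_m=(b-1)S_m$, where $S_m=\sum_k\chi_m(k)\lfloor bk/p\rfloor$. Since $\chi_m(b)=1$, $S_m=\sum_k\chi_m(k)(bk-(bk)_p)/p=(b-1)B_{\chi_m}$, which gives (\ref{1.10}).
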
 

\MN
{\em Remarks.} 1. We will see that the number $T_r$ of (\ref{1.8}) is a rational linear combination of the character values $\chi_r(k)$, $k=1,\LD, p-1$.
Therefore, $[T_r]_0$ is well-defined; recall (\ref{1.3}), (\ref{1.3.2}).

2. The main results of the paper \cite{Gi3} immediately follow from the special case $m=1$ in Theorems \ref{t1}, \ref{t2}.

3. Let $\chi\in X_p^+$. If $\psi\in X_d^-$, $d\DIV b$, is a primitive character, then $B_{\chi\psi}$ and $\BPSB$ do not vanish. If $\psi$ is imprimitive, let $\psi'$ be the primitive character mod $f$, $f\DIV d$, that induces $\psi$.
Then
\BE
\label{1.12}
 \BPS=\prod_{q\DIV d}(1-\psi'(q))B_{\psi'} \MB{ and } B_{\chi\psi}=\prod_{q\DIV d}(1-\chi\psi'(q))B_{\chi\psi'},
\EE
where $q$ runs through the prime divisors of $d$; see \cite[p. 274]{Sz}. In this case the Bernoulli numbers $B_{\chi\psi}$ and $\BPSB$ may vanish.

4. If $b$ and $m$ are given, the set of primes $p$ such that $b$ has the order $(p-1)/2^m$ has a positive natural density in many cases, provided that the Generalized Riemann Hypothesis holds. Indeed, this is always true
if $b$ is square-free; see \cite{Mo}. For instance, if $b=10$ and $m=3$, this density is $27A/608$, where $A$ is Artin's constant
\BD
  A=\prod_q\left(1-\frac 1{q(q-1)}\right)=0.3739558\ldots,
\ED
$q$ running through all primes.

5. Connections between digits and class number factors (i.e., Bernoulli numbers or their products) have been investigated in several papers; see, for instance, \cite{Gi1, Hi, MuTh, ChKr, Mi, PuSa}.

\section*{2. Special cases}

In this section we consider the case $m=3$ for $b=10$ and $b=12$. In addition, we assume that $\chi_3$ is odd, i.e., $p\equiv 9$ mod $16$.
Whereas the relevance of $b=10$ is clear, the case $b=12$ is remarkable inasmuch as
the formula for $T_1$ involves class numbers of imaginary quadratic number fields.

First let $b=10$. The only divisors $d$ of $10$ such that $X_d^-$ is not empty are $d=5,10$.
Since $\chi_1$ is the Legendre symbol, namely, $\chi_1(k)=\LS kp$ for $k\in\Z$, we have $\OV{\chi_1}=\chi_1$.
Because $p\equiv 1$ mod $8$, $\chi_1(2)$ equals $1$, and since the order of $10$ mod $p$
is $(p-1)/8$, we see that $\chi_1(10)=\chi_2(10)=\chi_3(10)=1$. This implies $\chi_1(5)=1$.
Accordingly, $\chi_2(2),\chi_2(5)\in\{\pm 1\}$.
We have $X_5^-=\{\psi_5,\OV{\psi_5}\}$, where $\psi_5$ is defined by $\psi_5(2)=i$. Moreover, $X_{10}^-=\{\WH{\psi_5},\WH{\OV{\psi_5}}\}$, where $\,\WH{\: }\,$ denotes the character mod $10$ induced by the respective character mod $5$.

In the case $r=1$, formula (\ref{1.8}) reads
\BE
\label{2.2}
  T_1=10\left(\MB{Re}(\OV{\psi_5}(p)B_{\chi_1\psi_5}B_{\OV{\psi_5}})+\MB{Re}(\OV{\psi_5}(p)B_{\chi_1\WH{\psi_5}}B_{\WH{\OV{\psi_5}}})\right),
\EE
where Re$(\LD)$ denotes the real part of a complex number.
Here $B_{\psi_5}=(-3-i)/5$, and, by (\ref{1.12}), $B_{\chi_1\WH{\psi_5}}=(1-i)B_{\chi_1\psi_5}$, and $B_{\WH{\OV{\psi_5}}}=(1+i)B_{\OV{\psi_5}}$.
Therefore, we obtain
\BE
\label{2.4}
 T_1=6\,\MB{Re}(\OV{\psi_5}(p)B_{\chi_1\psi_5}(-3+i)).
\EE

In the case $r=2$ one has to observe that $\chi_2$ is no more a real character and that $\chi_2(5)$ may be $\pm 1$. Accordingly, the analogue of formula (\ref{2.2}) looks more complicated, namely,
\begin{eqnarray}
\label{2.6}
 T_2&=&5\chi_2(5)\left(\OV{\psi_5}(p)B_{\chi_2\psi_5}B_{\OV{\psi_5}}+\psi_5(p)B_{\chi_2\OV{\psi_5}}B_{\psi_5}\right)+ \nonumber\\
    && 5\left(\OV{\psi_5}(p)B_{\chi_2\WH{\psi_5}}B_{\WH{\OV{\psi_5}}}+\psi_5(p)B_{\chi_2\WH{\OV{\psi_5}}}B_{\WH{\psi_5}}\right).
\end{eqnarray}
Note that $\chi_2(2)$ may be $\pm 1$, so we have $B_{\chi_2\WH{\psi_5}}=(1-\chi_2(2)i)B_{\chi_2\psi_5}$ and the respective formula for $B_{\chi_2\WH{\OV{\psi_5}}}$\,.
By (\ref{1.10}),
\BE
\label{2.8}
T_3=81B_{\chi_3}.
\EE

\MN
{\em Example.} Let $p=1609$. Since $7$ is a primitive root mod $p$, we may define $\chi_3$  by $\chi_3(7)=\zeta$, $\zeta=e^{2\pi i/8}$.
Then $B_{\chi_3}=-23-3\zeta-25\zeta^2-3\zeta^3$ and $[B_{\chi_3}]_0=-23$. Theorem \ref{t1} gives $S=801$.

In order to compute $T$ by means of Theorem \ref{t2}, we observe
$\psi_5(p)=-1$ and $B_{\chi_1\psi_5}=22+18i$. So formula (\ref{2.4}) yields $T_1=504$. We define $\chi_2$ by $\chi_2(7)=i$. Then $\chi_2(5)=\chi_2(2)=1$.
We have $B_{\chi_2\psi_5}=14+14i$ and $B_{\chi_2\OV{\psi_5}}=18+30i$. Formula (\ref{2.6}) gives $T_2=240+408i$.
By (\ref{2.8}), $[T_3]_0=-81\cdot 23=-1863$.
Finally, we obtain $T_0=45804$ from formula (\ref{1.6}). Formula (\ref{1.4}) yields $T=4917$.

\MN
Now let $b=12$. The relevant divisors $d$ of $12$ are $d=3,4,6,12$. We have $X_3^-=\{\psi_3\}$ with $\psi_3(k)=\LS k3$, $X_4^-=\{\psi_4\}$ with $\psi_4(3)=-1$, $X_6^-=\{\WH{\psi_3}\}$,
and $X_{12}^-=\{\WH{\psi_3}, \WH{\psi_4}\}$. Here $\,\WH{\: }\,$ denotes the character induced by the respective character mod $3$ and mod $4$. This case is simpler than the case $b=10$,
inasmuch as all characters in $X_d^-$, $d=3,4,6,12$, are real. We have $B_{\psi_3}=-1/3$ and $B_{\psi_4}=-1/2$.

In the case $r=1$, we use $B_{\chi_1\psi_3}=-h(-3p)$, $B_{\chi_1\psi_4}=-h(-p)$, where $h(n)$ is the class number of $\Q(\sqrt n)$. Formula (\ref{1.8}) gives
\begin{eqnarray}
\label{2.10}
   T_1&=&4\chi_1(3)\psi_3(p)h(-3p)+6\psi_4(p)h(-p)+16\chi_1(6)\psi_3(p)h(-3p)+\nonumber\\
       &&8\psi_3(p)h(-3p)+6\psi_4(p)(1+\chi_1(3))h(-p).
\end{eqnarray}
In the same way,
\begin{eqnarray}
\label{2.12}
   T_2&=&-4\OV{\chi_2}(3)\psi_3(p)B_{\chi_2\psi_3}-6\OV{\chi_2}(4)\psi_4(p)B_{\chi_2\psi_4}-8\OV{\chi_2}(6)\psi_3(p)(1+\chi_2(2))B_{\chi_2\psi_3}-\nonumber\\
       &&4\psi_3(p)(1+\chi_2(2))B_{\chi_2\psi_3}-6\psi_4(p)(1+\chi_2(3))B_{\chi_2\psi_4}.
\end{eqnarray}
Finally,
\BE
\label{2.14}
  T_3=121B_{\chi_3}.
\EE
{\em Example.} Let $p=601$. Since $7$ is a primitive root mod $p$, we may define $\chi_3$  by $\chi_3(7)=\zeta$, $\zeta=e^{2\pi i/8}$.
Then $[B_{\chi_3}]_0=-15$. Theorem \ref{t1} gives $S=330$. We have $\chi_1(3)=\chi_1(6)=1$, further $\psi_3(p)=\psi_4(p)=1$. The class numbers of formula (\ref{2.10})  are $h(-3p)=8$ and $h(-p)=20$.
We obtain $T_1=584$ from (\ref{2.10}). As concerns $T_2$, we define $\chi_2$ by $\chi_2(7)=i$. We have $\chi_2(2)=\chi_2(3)=\chi_2(6)=1$, and $B_{\chi_2\psi_3}=-8-8i$, $B_{\chi_2\psi_4}=-4+8i$.
Thus, $T_2=296+80i$, by (\ref{2.12}). Moreover, $[T_3]_0=-1815$, by (\ref{2.14}). Finally, $T_0=25300$, by (\ref{1.6}). So formula (\ref{1.4}) gives $T=2402$.

\section*{3. Proofs}

Let $b\ge 2$ and $p$ be as above, in particular $p\NDIV b$ and the order of $b$ mod $p$ is $l=(p-1)/2^m$, $m\ge 1$.
For an integer $j$ let $(j)_p$ be the representative of $j$ in $\{0,\LD,p-1\}$, i.e., $(j)_p$ is the integer $k$, $0\le k\le p-1$, that satisfies $k\equiv j$ mod $p$.
Then the digit $c_j\in\{0,\LD b-1\}$ of $1/p$ is given by
\BD
   c_j=\frac{b(b^{j-1})_p-(b^j)_p}p,
\ED
$j=1,\LD,l$; see \cite{Gi1}.
For $k=0,\LD,b$, let
\BD
   I_{b,k}=\Z\cap(0,kp/b).
\ED
In particular, $I_{b,0}=\emptyset$ and $I_{b,b}=\{1,\LD,p-1\}$. It is easy to see that, for $j=1,\LD,l$, and $k=0,\LD,b-1$,
we have $c_j\le k$ if, and only if $(b^{j-1})_p\in I_{b,k+1}$.
Again, we choose a Dirichlet character $\chi_r$ mod $p$ of the order $2^r$, $r=1,\LD,m$; see Theorem \ref{t2}.
Since $(b^{j-1})_p$ runs through the $2^m$th powers mod $p$,
\BE
\label{3.2}
  |\{j; 1\le j\le l, c_j\le k\}|=|\{n\in I_{b,k+1}; \chi_m(n)=1\}|.
\EE
For $k=0,\LD,b$, and $r=1,\LD,m$, put
\BD
  t_{r,b,k}=\sum_{n\in I_{b,k}}\chi_r(n).
\ED
and
\BE
\label{3.6}
  S_r=\sum_{k=0}^{b-1}k\cdot(t_{r,b,k+1}-t_{r,b,k}).
\EE
Note that $S_r$ has the form (\ref{1.3}).
In addition, put
\BE
\label{3.8}
 S_0=\sum_{k=0}^{b-1}k\cdot |I_{b,k+1}\sminus I_{b,k}|.
\EE

\MN
{\em Proof of Theorem \ref{t1}.}
For $r\ge 1$,
\BE
\label{3.9.1}
  [S_r]_0=\sum_{k=0}^{b-1}k\cdot(|\{n\in I_{b,k+1}\sminus I_{b,k};\chi_r(n)=1\}|-|\{n\in I_{b,k+1}\sminus I_{b,k};\chi_r(n)=-1\}|)
\EE
and $[S_1]_0=S_1$.
We have
\BE
\label{3.10}
   S=\sum_{k=0}^{b-1}k\cdot|\{j; 1\le j\le l, c_j=k\}|=\sum_{k=0}^{b-1}k\cdot|\{n\in I_{b,k+1}\sminus I_{b,k}; \chi_m(n)=1\}|,
\EE
by (\ref{3.2}).
Now
\BD
 (S_0+S_1)/2=\sum_{k=0}^{b-1}k\cdot|\{n\in I_{b,k+1}\sminus I_{b,k};\chi_1(n)=1\}|.
\ED
We use the fact that $\chi_r(n)=1$ if, and only if, $\chi_{r+1}(n)=\pm 1$, $r\ge 1$. Indeed, $\chi_r(n)=1$ is the same as saying $\chi_{r+1}(n^2)=1$, which means $\chi_{r+1}(n)^2=1$.
Thereby, and by (\ref{3.9.1}),
\BD
 ((S_0+S_1)/2+[S_2]_0)/2=\sum_{k=0}^{b-1}k\cdot|\{n\in I_{b,k+1}\sminus I_{b,k};\chi_2(n)=1\}|.
\ED
We use this argument repeatedly and obtain, in view of (\ref{3.10}),
\BE
\label{3.12}
 (S_0+S_1)/2^m+[S_2]_0/2^{m-1}+\LD+[S_m]_0/2=\sum_{k=0}^{b-1}k\cdot|\{n\in I_{b,k+1}\sminus I_{b,k}; \chi_m(n)=1\}|=S.
\EE
As concerns $S_0$, we observe
\BD
  |I_{b,k}|=\begin{cases} \lfloor kp/b\rfloor & \MB{ if } k\le b-1;\\
                      p-1                 & \MB{ if } k=b.
         \end{cases}
\ED
Hence the telescoping nature of formula (\ref{3.8}) implies
\BE
\label{3.12.1}
 S_0=(b-1)(p-1)-\sum_{k=1}^{b-1}\lfloor kp/b\rfloor =(b-1)(p-1)-(b-1)(p-1)/2=(b-1)(p-1)/2;
\EE
for the sum on the left-hand side see \cite[formula (41)]{RaGr}.

Because of its telescoping nature, formula (\ref{3.6}) yields, for $r\ge 1$,
\BE
\label{3.13}
  S_r=(b-1)t_{r,b,b}-\sum_{k=1}^{b-1}t_{r,b,k}=-\sum_{k=1}^{b-1}t_{r,b,k}
\EE
since $t_{r,b,b}=\sum_{n=1}^{p-1}\chi_r(n)=0$.
Now suppose $(k,b)=d$ for a number $k\in\{1,\LD,b-1\}$. Then
\BD
  t_{r,b,k}=t_{r,b/d,k/d}\: \MB{ with }\: b/d>1, (b/d,k/d)=1.
\ED
Therefore,
\BE
\label{3.14}
 \sum_{k=1}^{b-1}t_{r,b,k}=\sum_{\btop{d\DIV b,}{d>1}}\sum_{\btop{l=1}{(l,d)=1}}^{d-1}t_{r,d,l}.
\EE

Suppose that $\chi_r$ is even. Then formula (6) of \cite{Sz} says, since $d>1$ and $(l,d)=1$,
\BE
\label{3.18}
  t_{r,d,l}=\sum_{1\le n< lp/d}\chi_r(n)=-\frac{\OV{\chi_r}(d)}{\phi(d)}\sum_{\psi\in X_d^-}\OV{\psi}(lp)B_{\chi_r\psi}.
\EE
From (\ref{3.14}) and (\ref{3.18}) we obtain
\BE
\label{3.19}
  \sum_{k=1}^{b-1}t_{r,b,k}=-\sum_{\btop{d\DIV b,}{d>1}}\frac{\OV{\chi_r}(d)}{\phi(d)}\sum_{\psi\in X_d^-}\OV{\psi}(p)B_{\chi_r\psi}\sum_{\btop{l=1}{(l,d)=1}}^{d-1}\OV{\psi}(l).
\EE
Here the innermost sum on the right-hand side is $0$, since $\OV{\psi}$ is not the principal character. Because of (\ref{3.13}), $S_r=0$.

Suppose that $\chi_m$ is odd. Then the said formula (6) of \cite{Sz} yields
\BD
  t_{m,d,l}=-B_{\chi_m}+\frac{\OV{\chi_m}(d)}{\phi(d)}\sum_{\psi\in X_d^+}\OV{\psi}(lp)B_{\chi_r\psi}.
\ED
This results in
\BE
\label{3.19.1}
 \sum_{k=1}^{b-1}t_{m,b,k}=-\sum_{\btop{d\DIV b,}{d>1}}\sum_{\btop{l=1}{(l,d)=1}}^{d-1}B_{\chi_m}+
 \sum_{\btop{d\DIV b,}{d>1}}\frac{\OV{\chi_m}(d)}{\phi(d)}\sum_{\psi\in X_d^+}\OV{\psi}(p)B_{\chi_m\psi}\sum_{\btop{l=1}{(l,d)=1}}^{d-1}\OV{\psi}(l).
\EE
The initial double sum on the right-hand side equals
\BD
 -\sum_{\btop{d\DIV b,}{d>1}}\phi(d)B_{\chi_m}=-(b-1)B_{\chi_m}.
\ED
The following triple sum boils down to
\BE
\label{3.20}
  \sum_{\btop{d\DIV b,}{d>1}}\OV{\chi_m}(d)B_{\chi_m\psi_{0, d}},
\EE
where $\psi_{0,d}$ denotes the principal character mod $d$.
By (\ref{1.12}), we have $B_{\chi_m\psi_{0, d}}=\prod_{q\DIV d}(1-\chi_m(q))B_{\chi_m}$,
$q$ running through the prime divisors of $d$. Now the sum of formula (\ref{3.20}) vanishes, since
\BD
  \sum_{d\DIV b}\OV{\chi_m}(d)\prod_{q\DIV d}(1-\chi_m(q))=1.
\ED
Indeed, this expression equals
\BD
   \sum_{d\DIV b}\OV{\chi}_m(d)\sum_{t\DIV d}\chi_m(t)\mu(t)=\sum_{t\DIV b}\mu(t)\sum_{u\DIV \frac bt}\OV{\chi_m}(u)=\sum_{u\DIV b}\OV{\chi_m}(u)\sum_{t\DIV \frac bu}\mu(t)=\OV{\chi_m}(b)=1,
\ED
because $b$ is a $2^m$th power mod $p$ (as usual, $\mu(\LD)$ denotes M\"obius' function). In view of (\ref{3.13}), we obtain $S_m=(b-1)B_{\chi_m}$.
Theorem \ref{t1} follows from formula (\ref{3.12}).
\STOP

\MN
{\em Proof of Theorem 2.} The proof is quite analogous to the above proof. Indeed, we have
\BD
   T=\sum_{k=0}^{b-1}k^2\cdot|\{j; 1\le j\le l, c_j=k\}|=\sum_{k=0}^{b-1}k^2\cdot|\{n\in I_{b,k+1}\sminus I_{b,k}; \chi_m(n)=1\}|,
\ED
We define
\BD
  T_r=\sum_{k=0}^{b-1}k^2\cdot(t_{r,b,k+1}-t_{r,b,k})
\ED
and
\BD
 T_0=\sum_{k=0}^{b-1}k^2\cdot |I_{b,k+1}\sminus I_{b,k}|;
\ED
see (\ref{3.6}), (\ref{3.8}). This gives the analogue of (\ref{3.12}), namely,
\BD
 (T_0+T_1)/2^m+[T_2]_0/2^{m-1}+\LD+[T_m]_0/2=\sum_{k=0}^{b-1}k^2\cdot|\{n\in I_{b,k+1}\sminus I_{b,k}; \chi_m(n)=1\}|=T.
\ED
Now the telescoping sum $T_0$ can be written
\BD
   T_0=(b-1)^2(p-1)-2\sum_{k=1}^{b-1}k\lfloor kp/b\rfloor+\sum_{k=1}^{b-1}\lfloor kp/b\rfloor,
\ED
where the second sum on the right-hand side equals $(b-1)(p-1)/2$; see (\ref{3.12.1}). The first sum takes the value
\BD
  -bs(p,b)+(b-1)(4bp-2p-3b)/12;
\ED
see \cite[formula (39)]{RaGr}. Altogether, we have the identity (\ref{1.6}).

For $r\ge 1$ the telescoping sum $T_r$ takes the form
\BE
\label{3.30}
 T_r=-2\sum_{k=1}^{b-1}kt_{r,b,k}+\sum_{k=1}^{b-1}t_{r,b,k}.
\EE
The second of these sums has been computed in the proof of Theorem \ref{t1}. Its value is $0$ if $\chi_r$ is even and $-(b-1)B_{\chi_m}$ if $\chi_m$ is odd; see (\ref{3.13}) and the end of the said proof.

Let $\chi_r$ be even. Then the evaluation of the first sum leads to an analogue of formula (\ref{3.19}), i.e.,
\BD
  \sum_{k=1}^{b-1}kt_{r,b,k}=-\sum_{\btop{d\DIV b,}{d>1}}\frac bd\,\frac{\OV{\chi_r}(d)}{\phi(d)}\sum_{\psi\in X_d^-}\OV{\psi}(p)B_{\chi_r\psi}\sum_{\btop{l=1}{(l,d)=1}}^{d-1}l\OV{\psi}(l).
\ED
We obtain (\ref{1.8}) since the second sum of (\ref{3.30}) vanishes (note that the condition $d>1$ can be omitted, because $X_1^-=\emptyset$).

Let $\chi_m$ be odd. Then the evaluation of the first sum of (\ref{3.30}) leads to an analogue of formula (\ref{3.19.1}), namely,
\BD
  \sum_{k=1}^{b-1}kt_{m,b,k}=-\sum_{\btop{d\DIV b}{d>1}}\frac bd\sum_{\btop{l=1}{(l,d)=1}}^{d-1}lB_{\chi_m}+
  \sum_{\btop{d\DIV b}{d>1}}\frac bd\,\frac{\OV{\chi}_m(d)}{\phi(d)}\sum_{\psi\in X_d^+}\OV{\psi}(p)B_{\chi_m\psi}\sum_{\btop{l=1}{(l,d)=1}}^{d-1}l\OV{\psi}(l).
\ED
In order to evaluate the initial double sum on the right-hand side, we use the formula
\BD
  \sum_{\btop{l=1}{(l,d)=1}}^{d-1}l=d\phi(d)/2;
\ED
see \cite[p. 48]{Ap}.
Therefore, the value of this double sum is $-b(b-1)B_{\chi_m}/2$. The following triple sum vanishes. Indeed, its innermost sum is $0$ except if $\psi=\psi_{0,d}$, the principal character mod $d$.
In this case the innermost sum is $d\phi(d)/2$ and the value of the triple sum is
\BD
  \frac b2\sum_{\btop{d\DIV b,}{d>1}}\OV{\chi_m}(d)B_{\chi_m\psi_{0,d}}.
\ED
But this expression vanishes, as we have seen in the case of the sum of formula (\ref{3.20}).
Altogether, we obtain the identity (\ref{1.10}).
\STOP

\bigskip
\centerline{\bf Competing interests and data availability}

\MN
The author declares that there are no competing interests. The paper has no associated data.


\MN
Institut f\"ur Mathematik \\
Universit\"at Innsbruck   \\
Technikerstr. 13/7        \\
A-6020 Innsbruck, Austria \\
Kurt.Girstmair@uibk.ac.at

\end{document}